\documentclass[a4paper,11pt]{amsart}

\usepackage[utf8]{inputenc}
\usepackage[english]{babel}
\usepackage[bitstream-charter]{mathdesign}
\usepackage[T1]{fontenc}
\usepackage{amsfonts}
\usepackage{geometry}
\usepackage{amsmath, amsthm}
\usepackage[colorlinks = true]{hyperref}
\hyphenation{english} 
\usepackage{color}
\usepackage{pgf,tikz}
\usetikzlibrary{arrows}
\usepackage{cancel}
\usepackage{color}
\usepackage{subfigure}
\usepackage{float}
\usepackage{verbatim}
\usepackage{wrapfig}
\usepackage{mathtools}

%Math commands

%caligraphic letters

\usepackage[normalem]{ulem}

%%%%%%%%%%%%%%%%%%
%%%%%%%%%%%%%%%%%%
%%%%%%%%%%%%%%%%%%%

%-------------------------------------------------------
%\usepackage{amsfonts}
\usepackage{pdfsync}
\usepackage{color}
%-----------------------------
\usepackage{mathrsfs}

\hyphenation{apply}

\usepackage[T1]{fontenc}
\usepackage{mathrsfs}
\usepackage{epsfig}
\usepackage{xypic}
\usepackage{verbatim}
\usepackage{multicol}
\usepackage{wrapfig}
\usepackage{pgf,tikz}
\usepackage{mathrsfs}
\usetikzlibrary{arrows}

\numberwithin{equation}{section}

	\usepackage{amsrefs}

\newcommand{\Dom}{\mathrm{Dom}}

\newcommand{\N}{\mathbb{N}}

\newcommand{\R}{\mathbb{R}}

\usepackage{mathrsfs}
\usepackage{mathtools}
\usepackage{comment}
\usepackage{indentfirst}
\usepackage{braket}
\usepackage{scalerel}

{\left\lbrace\begin{array}{@{}l@{}}}%
{\end{array}\right.}

\makeatletter
\newcommand*{\mint}[1]{%
	% #1: overlay symbol
	\mint@l{#1}{}%
}
\newcommand*{\mint@l}[2]{%
	% #1: overlay symbol
	% #2: limits
	\@ifnextchar\limits{%
		\mint@l{#1}%
	}{%
	\@ifnextchar\nolimits{%
		\mint@l{#1}%
	}{%
	\@ifnextchar\displaylimits{%
		\mint@l{#1}%
	}{%
	\mint@s{#2}{#1}%
}%
}%
}%
}
\newcommand*{\mint@s}[2]{%
	% #1: limits
	% #2: overlay symbol
	\@ifnextchar_{%
		\mint@sub{#1}{#2}%
	}{%
	\@ifnextchar^{%
		\mint@sup{#1}{#2}%
	}{%
	\mint@{#1}{#2}{}{}%
}%
}%
}
\def\mint@sub#1#2_#3{%
	\@ifnextchar^{%
		\mint@sub@sup{#1}{#2}{#3}%
	}{%
	\mint@{#1}{#2}{#3}{}%
}%
}
\def\mint@sup#1#2^#3{%
	\@ifnextchar_{%
		\mint@sub@sup{#1}{#2}{#3}%
	}{%
	\mint@{#1}{#2}{}{#3}%
}%
}
\def\mint@sub@sup#1#2#3^#4{%
	\mint@{#1}{#2}{#3}{#4}%
}
\def\mint@sup@sub#1#2#3_#4{%
	\mint@{#1}{#2}{#4}{#3}%
}
\newcommand*{\mint@}[4]{%
	% #1: \limits, \nolimits, \displaylimits
	% #2: overlay symbol: -, =, ...
	% #3: subscript
	% #4: superscript
	\mathop{}%
	\mkern-\thinmuskip
	\mathchoice{%
		\mint@@{#1}{#2}{#3}{#4}%
		\displaystyle\textstyle\scriptstyle
	}{%
	\mint@@{#1}{#2}{#3}{#4}%
	\textstyle\scriptstyle\scriptstyle
}{%
\mint@@{#1}{#2}{#3}{#4}%
\scriptstyle\scriptscriptstyle\scriptscriptstyle
}{%
\mint@@{#1}{#2}{#3}{#4}%
\scriptscriptstyle\scriptscriptstyle\scriptscriptstyle
}%
\mkern-\thinmuskip
\int#1%
\ifx\\#3\\\else_{#3}\fi
\ifx\\#4\\\else^{#4}\fi  
}
\newcommand*{\mint@@}[7]{%
	% #1: limits
	% #2: overlay symbol
	% #3: subscript
	% #4: superscript
	% #5: math style
	% #6: math style for overlay symbol
	% #7: math style for subscript/superscript
	\begingroup
	\sbox0{$#5\int\m@th$}%
	\sbox2{$#5\int_{}\m@th$}%
	\dimen2=\wd0 %
	% => \dimen2 = width of \int
	\let\mint@limits=#1\relax
	\ifx\mint@limits\relax
	\sbox4{$#5\int_{\kern1sp}^{\kern1sp}\m@th$}%
	\ifdim\wd4>\wd2 %
	\let\mint@limits=\nolimits
	\else
	\let\mint@limits=\limits
	\fi
	\fi
	\ifx\mint@limits\displaylimits
	\ifx#5\displaystyle
	\let\mint@limits=\limits
	\fi
	\fi
	\ifx\mint@limits\limits
	\sbox0{$#7#3\m@th$}%
	\sbox2{$#7#4\m@th$}%
	\ifdim\wd0>\dimen2 %
	\dimen2=\wd0 %
	\fi
	\ifdim\wd2>\dimen2 %
	\dimen2=\wd2 %
	\fi
	\fi
	\rlap{%
		$#5%
		\vcenter{%
			\hbox to\dimen2{%
				\hss
				$#6{#2}\m@th$%
				\hss
			}%
		}%
		$%
	}%
	\endgroup
}

\makeatletter
\def\overbracket#1{\mathop{\vbox{\ialign{##\crcr\noalign{\kern3\p@}
				\downbracketfill\crcr\noalign{\kern3\p@\nointerlineskip}
				$\hfil\displaystyle{#1}\hfil$\crcr}}}\limits}
\def\underbracket#1{\mathop{\vtop{\ialign{##\crcr
				$\hfil\displaystyle{#1}\hfil$\crcr\noalign{\kern3\p@\nointerlineskip}
				\upbracketfill\crcr\noalign{\kern3\p@}}}}\limits}
\def\overparenthesis#1{\mathop{\vbox{\ialign{##\crcr\noalign{\kern3\p@}
				\downparenthfill\crcr\noalign{\kern3\p@\nointerlineskip}
				$\hfil\displaystyle{#1}\hfil$\crcr}}}\limits}
\def\underparenthesis#1{\mathop{\vtop{\ialign{##\crcr
				$\hfil\displaystyle{#1}\hfil$\crcr\noalign{\kern3\p@\nointerlineskip}
				\upparenthfill\crcr\noalign{\kern3\p@}}}}\limits}
\def\downparenthfill{$\m@th\braceld\leaders\vrule\hfill\bracerd$}
\def\upparenthfill{$\m@th\bracelu\leaders\vrule\hfill\braceru$}
\def\upbracketfill{$\m@th\makesm@sh{\llap{\vrule\@height3\p@\@width.7\p@}}%
	\leaders\vrule\@height.7\p@\hfill
	\makesm@sh{\rlap{\vrule\@height3\p@\@width.7\p@}}$}
\def\downbracketfill{$\m@th
	\makesm@sh{\llap{\vrule\@height.7\p@\@depth2.3\p@\@width.7\p@}}%
	\leaders\vrule\@height.7\p@\hfill
	\makesm@sh{\rlap{\vrule\@height.7\p@\@depth2.3\p@\@width.7\p@}}$}
\makeatother

%Theorems

\theoremstyle{theorem}
\newtheorem{theorem}{\sc Theorem}[section]  
\newtheorem{proposition}[theorem]{\sc Proposition}   
        
\newtheorem{lemma}[theorem]{\sc Lemma}

\theoremstyle{remark}
\newtheorem{definition}[theorem]{\sc Definition}

\newtheorem{remark}[theorem]{Remark}

\usepackage{amsfonts}

\newcommand{\dd}{\mathrm{d}}

%\newcommand{\Lie}{\mathrm{Lie}}

%\newcommand{\Fr}{Fr}

%\newcommand{\N}{\mathds{N}}
%\newcommand{\Z}{\mathds{Z}}
%\newcommand{\Q}{\mathds{Q}}
%\newcommand{\C}{\mathds{C}}

%\newcommand{\R}{\mathds{R}}

%\newcommand{\Dc}{\mathcal{D}}

%\newcommand{\mathcal{L}}{\mathcal{L}}
%\newcommand{\Mc}{\mathcal{M}}

%\newcommand{\Rc}{\mathcal{R}}

%\newcommand{\Uc}{\mathcal{U}}

%\newcommand{\Xc}{\mathcal{X}}
%\newcommand{\Yc}{\mathcal{Y}}
%\newcommand{\Zc}{\mathcal{Z}}

%\renewcommand{\phi}{\varphi}

%Geometry

\geometry{verbose,tmargin=2.3cm,lmargin=3.4cm,rmargin=3.4cm}

\headsep 1cm

\begin{document}
\title[Maximal Hypoellipticity on Lie Groups]{Maximal Hypoellipticity for Left-Invariant Differential Operators on Lie Groups}

\author[Tommaso Bruno]{Tommaso Bruno}

\address{Tommaso Bruno \\Dipartimento di Scienze Matematiche ``Giuseppe Luigi Lagrange'', Politecnico di Torino, Corso Duca degli Abruzzi 24, 10129 Torino, Italy}
\email{tommaso.bruno@polito.it}

\maketitle

\renewcommand{\labelitemi}{$\bullet$}
\begin{abstract}
Given a differential operator defined in terms of left-invariant vector fields on a Lie group, we prove that the local condition defining maximal hypoellipticity is equivalent to a global estimate if the operator is left invariant. As a consequence, we characterize the domain of the closure in $L^p$ of certain maximal hypoelliptic differential operators on test functions on Lie groups.
\end{abstract}

\section{Introduction}
The notion of maximal hypoellipticity for a differential operator of degree $m$ was first introduced by Helffer and Nourrigat (cf.~\cite{HelffNourr} or~\cite[Chapter 8]{HelffNier}) and it consists, for the smooth functions supported on a given bounded open set, on a control on the Sobolev norm of order $m$ in terms of the graph norm of the operator, with a constant depending on the chosen set. It seems a natural but unexplored question whether this local condition is equivalent to a global estimate when the operator and the norms involved enjoy some sort of translation invariance. The aim of this paper is to investigate on this problem. 

We consider matrix-valued, differential operators defined in terms of left-invariant vector fields on Lie groups endowed with a left measure. We answer in the affirmative when the operator is left invariant, or more generally when its principal part is left invariant and the lower order terms have essentially bounded coefficients (Theorem~\ref{teocontrolN}). The matrix-valued case is aimed at considering differential operators acting on differential forms of arbitrary degree~(see recent works by Baldi, Franchi and Tesi~\cite{BFT1,BFT2,BFT3}). Indeed, this property allows us to characterize the domain of the closure in $L^p$ for suitable $p$'s of some maximal hypoelliptic differential operators on Lie groups, including Rumin's Laplacians~\cite{Rumin1} on the contact structure of the Heisenberg group (Proposition~\ref{Chardomain}). In addition to its own interest, this result can be combined with a result of Marchi~\cite{Marchi} to obtain the essential self-adjointness on $L^2$ of Rumin's Laplacians defined on test horizontal forms. This was indeed our initial motivation in considering the problems of this paper.

\subsection{Notation}
Let $G$ be a noncompact, connected Lie group and identify its Lie algebra $\mathfrak{g}$ with the algebra of left-invariant vector fields on $G$. For every positive integer $k$ and multi-index $I=(I_1,\dots, I_k)$ of length $ |I|=k$, denote with $X_I$ the differential operator $X_{I_1}\cdots X_{I_k}$, where $X_{I_j}\in \mathfrak{g}$. We shall consider operators $\mathcal{P}\colon C^\infty(G)^\mathfrak{n} \to C^\infty(G)^\mathfrak{m}$ for arbitrary $\mathfrak{n},\mathfrak{m}\in \N$ of the form
\[
\mathcal{P} (\alpha_1,\dots,\alpha_\mathfrak{n})= \left(\sum_{i=1}^\mathfrak{n}\mathcal{P}_{i1}\alpha_i, \, \dots, \, \sum_{i=1}^\mathfrak{n} \mathcal{P}_{i\mathfrak{m}}\alpha_i\right), \qquad \alpha = (\alpha_1, \dots, \alpha_\mathfrak{n} )\in C^\infty(G)^\mathfrak{n},
\]
where $\mathcal{P}_{ij}$ are (scalar-valued) differential operators on $G$ given by
\begin{equation}\label{diffop}
\mathcal{P}_{ij}=\sum_{0\leq |I|\leq m} a^I_{ij}\,X_I
\end{equation}
for some measurable coefficients $(a^I_{ij})$ on $G$, vector fields $X_{I_j}$ in $\mathfrak{g}$, and $m\in \N$. Equivalently, if $ (\eta_1,\dots,\eta_\mathfrak{n})$ is a basis of $\R^\mathfrak{n}$ and $(\xi_1,\dots,\xi_\mathfrak{m})$ is a basis of $\R^\mathfrak{m}$, we may write
\begin{equation}\label{difopN}
\mathcal{P}\alpha = \sum_{j=1}^\mathfrak{m} \sum_{i=1}^\mathfrak{n} (\mathcal{P}_{ij} \alpha_i)\xi_j, \qquad \alpha = \sum_{i=1}^\mathfrak{n} \alpha_i \eta_i \in C^\infty(G)^\mathfrak{n},
\end{equation}
where $(\mathcal{P}_{ij})$'s are as in~\eqref{diffop}. To shorten the notation, we often identify $\mathcal{P}$ with its components $(\mathcal{P}_{ij})$. Following~\cite{BFT3}, we say that $\mathcal{P}$ is left invariant if $\mathcal{P}_{ij}$ is for every $i,j$, that is if $\mathcal{P}_{ij}$ commutes with left translations. For $x\in G$, by left translation by $x$ we mean the operator $\tau_x^*$ given by $(\tau_x^*u)(y)= u(x^{-1} y)$ for every measurable function $u$. It is defined on vector-valued functions by componentwise action. We say that $\mathcal{P}$ has order $m\in \N$, $m\geq 1$, if all $\mathcal{P}_{ij}$ have orders $\leq m$ and at least one of them has order equal to $m$. We shall call \emph{principal part} of the scalar-valued operator $\mathcal{P}_{ij}$ in~\eqref{diffop} the operator
\[(\mathcal{P}_{ij})_m\coloneqq \sum_{|I|=m} a_{ij}^I \,X_I,\]
while the operator \[(\mathcal{P}_{ij})_{<m}\coloneqq \mathcal{P}_{ij}-(\mathcal{P}_{ij})_m\]
will be called its \emph{lower order terms}. We then set \[\mathcal{P}_m\coloneqq ((\mathcal{P}_{ij})_m),\qquad \mathcal{P}_{<m}\coloneqq ((\mathcal{P}_{ij})_{<m}).\]
Let $\lambda$ be a left Haar measure on $G$. For every measurable subset $\Omega\subseteq G$ and $p\in [1,\infty)$, we denote with $L^p(\Omega,\lambda)$ the space of all (equivalence classes of) measurable functions $u$ such that $|u|^p$ is integrable on $\Omega$ with respect to $\lambda$, and if $p=\infty$, we denote with $L^\infty(\Omega,\lambda)$ the space of (equivalence classes of) functions which are $\lambda$-essentially bounded on $\Omega$. We endow these spaces with the usual norms which we denote $\| \cdot \|_{L^p(\Omega, \lambda)}$. For $\mathfrak{n}\in \N$, we denote with $L^{p,\mathfrak{n}}(\Omega,\lambda) = (L^p(\Omega, \lambda))^\mathfrak{n}$ the space of $\mathfrak{n}$-vectors whose entries are in $L^p(\Omega, \lambda)$. We shall endow $L^{p,\mathfrak{n}}(\Omega,\lambda)$ with the norm
\[\|\alpha\|_{L^{p,\mathfrak{n}}(\Omega,\lambda)} = \sum_{i=1}^\mathfrak{n} \|\alpha_i\|_{L^p(\Omega,\lambda)}.\]
If $\Omega= G$, we will write $L^{p,\mathfrak{n}}(\lambda)$. We will often also omit to specify the measure $\lambda$ and write simply $L^p$, $L^p(\Omega)$, $\| \cdot \|_p$, and so on, to avoid cumbersome notation. The space of smooth and compactly supported $\mathfrak{n}$-vectors $(C_c^\infty(\Omega))^\mathfrak{n}$ will be denoted $C_c^{\infty,\mathfrak{n}}(\Omega)$.

Denote with $\mathfrak{X}=\{X_1,\ldots,X_\nu\}$ a family of linearly independent left-invariant vector fields in $\mathfrak{g}$. We shall say that an operator $\mathcal{P}$ as in~\eqref{difopN} is a differential operator \emph{along derivatives of $\mathfrak{X}$} if $X_{I_j}\in \mathfrak{X}$ for every $I$ appearing in~\eqref{diffop}. If $u\in C^\infty_c(G)$, $p\in [1,\infty]$ and $\ell\in \N_+$, we define
\[
|u|^\mathfrak{X}_{\ell,p}=\sum_{|I|=\ell} \|X_I u\|_p, \quad X_{I_j}\in \mathfrak{X}, \quad  j=1,\dots,\ell
\]
and for $\alpha \in C^{\infty,\mathfrak{n}}_c(G)$, we set
\[
|\alpha|^{\mathfrak{X}}_{\ell,p}= \sum_{i=1}^\mathfrak{n} |\alpha_i|^{\mathfrak{X}}_{\ell,p}.
\]
Observe that we do not require a priori that $\mathfrak{X}$ satisfies H\"ormander's condition.  If this is the case, however, and if $p\in (1,\infty)$, then
\[
\sum_{\ell=0}^m |u|^\mathfrak{X}_{\ell,p} \approx \| u\|_{L^p_m(\lambda)}
\]
where $L^p_m(\lambda)$ is one of the Sobolev spaces defined in~\cite{BPTV} (see in particular~\cite[Proposition 3.3]{BPTV}, and also Section~\ref{Secdomain} below). In view of the results therein, the choice of the {left} Haar measure on $G$ seems rather natural.

\subsection{Maximal Hypoellipticity and results}

We are now ready to define maximal hypoellipticity. The reader should compare our definition with that of~\cite{HelffNourr}.
\begin{definition}\label{Def:maxhypo}
Let $\mathcal{P}$ be a differential operator of degree $m$ and let $\Omega \subseteq G$ be open. If $\mathfrak{X}$ is a family of left-invariant vector fields and $p\in [1,\infty]$, we say that $\mathcal{P}$ is $\mathfrak{X}$-maximal hypoelliptic in $L^p(\Omega)$ if there exists a constant $C(\Omega)$ such that
\begin{equation}\label{maxhypo}
\sum_{\ell=0}^m|\alpha|^{\mathfrak{X}}_{\ell,p}\leq C(\Omega)\, (\|\mathcal{P}\alpha\|_p+\|\alpha\|_p) \qquad\forall \alpha\in C^{\infty,\mathfrak{n}}_c(\Omega).
\end{equation}
We say that $\mathcal{P}$ is $(\mathfrak{X},p)$-maximal hypoelliptic if for every $x\in G$ there exists an open neighbourhood $\Omega_x$ of $x$ such that $\mathcal{P}$ is $\mathfrak{X}$-maximal hypoelliptic in $L^p(\Omega_x)$.
\end{definition}
If $G$ is a stratified group and the family $\mathfrak{X}$ is a basis of the first layer of its Lie algebra, for every bounded open $\Omega \subset G$ our definition of $\mathfrak{X}$-maximal hypoellipticity in $L^2(\Omega)$ coincides with the maximal hypoellipticity on $\Omega$ considered in~\cite{BFT3}. Outside stratified groups, examples of $\mathfrak{X}$-maximal hypoelliptic operators in $L^2(\Omega)$ for some $\Omega \subset G$ are Rumin's Laplacians~\cite{Rumin1} on the group $\mathrm{SL}(2,\R)$ and on the groups of~\cite{Khak,Diatta}, when $\mathfrak{X}$ is a basis of the kernel of the contact 1-form of the group (see also~\cite{BaudGar}). The case $p \in (1,\infty)$ was also considered in~\cite{BFT3}.

\smallskip

Observe that if $\Omega_0 \subseteq \Omega$ is open and $\mathcal{P}$ is $\mathfrak{X}$-maximal hypoelliptic in $L^p(\Omega)$, then it is also in $L^p(\Omega_0)$. Moreover, if $\mathcal{P}$ is left invariant and $\mathfrak{X}$-maximal hypoelliptic in $L^p(\Omega)$ for some $\Omega \subseteq G$, then it is $(\mathfrak{X},p)$-maximal hypoelliptic. Indeed, if $x\in G$, one can left translate $\Omega$ to get a neighbourhood $\Omega_x$ of $x$, and $\mathcal{P}$ is $\mathfrak{X}$-maximal hypoelliptic in $L^p(\Omega_x)$ by its left invariance and the left invariance of the norm.

It seems a natural question whether a left-invariant differential operator $\mathcal{P}$ which is $\mathfrak{X}$-maximal hypoelliptic in $L^p(\Omega)$ for some open $\Omega \subset G$ is also $\mathfrak{X}$-maximal hypoelliptic in $L^p(G)$. In other words, whether the local estimate~\eqref{maxhypo} for a left-invariant operator $\mathcal{P}$ implies the global estimate
\[
\sum_{\ell=0}^m|\alpha|^{\mathfrak{X}}_{\ell,p}\leq C \, (\|\mathcal{P}\alpha\|_p+\|\alpha\|_p) \qquad\forall \alpha\in C^{\infty,\mathfrak{n}}_c(G)
\]
for some constant $C$ independent of $\alpha$. The main result of this paper is the following.
\begin{theorem}\label{teocontrolN}
Let $p \in [1, \infty)$, let $\mathfrak{X}$ be a family of left-invariant vector fields and $\mathcal{P}$ be a matrix-valued differential operator of degree $m$ along derivatives of $\mathfrak{X}$. Assume that
\begin{itemize}
\item[\emph{1.}] $\mathcal{P}_m$ is left invariant;
\item[\emph{2.}] $\mathcal{P}_{<m}$ has $L^\infty$ coefficients.
\end{itemize}
Then, if $\mathcal{P}$ is $\mathfrak{X}$-maximal hypoelliptic in $L^p(\Omega)$ for some $\Omega \subseteq G$, it is $\mathfrak{X}$-maximal hypoelliptic in $L^p(G)$.
\end{theorem}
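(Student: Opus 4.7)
The plan is a partition of unity argument in which the translation invariance of $\mathcal{P}_m$ is used to transport the local estimate on $\Omega$ to every left translate $x\Omega$ with the same constant, and the $L^\infty$ bound on the coefficients of $\mathcal{P}_{<m}$ keeps the resulting lower-order perturbation under uniform control. I would then localize an arbitrary $\alpha\in C_c^{\infty,\mathfrak{n}}(G)$ via a bounded-overlap cover of $G$ by translates of a precompact $U\subset\Omega$, apply the translated estimate to each piece, and reassemble.

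For the translation step, given $x\in G$ and $\alpha\in C_c^{\infty,\mathfrak{n}}(x\Omega)$, set $\beta\coloneqq \tau_{x^{-1}}^*\alpha\in C_c^{\infty,\mathfrak{n}}(\Omega)$. Left invariance of $\mathfrak{X}$, of the Haar measure $\lambda$, and of $\mathcal{P}_m$ gives
\[
|\beta|^\mathfrak{X}_{\ell,p}=|\alpha|^\mathfrak{X}_{\ell,p},\qquad \|\beta\|_p=\|\alpha\|_p,\qquad \|\mathcal{P}_m\beta\|_p=\|\mathcal{P}_m\alpha\|_p,
\]
while the $L^\infty$ hypothesis yields $\|\mathcal{P}_{<m}\beta\|_p\le K|\alpha|^\mathfrak{X}_{<m,p}$ with $K$ depending only on the coefficients of $\mathcal{P}_{<m}$. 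Plugging into the local estimate for $\beta$ and applying the triangle inequality produces
\[
\sum_{\ell=0}^{m}|\alpha|^\mathfrak{X}_{\ell,p}\le C(\Omega)\bigl(\|\mathcal{P}\alpha\|_p+2K|\alpha|^\mathfrak{X}_{<m,p}+\|\alpha\|_p\bigr)
\]
uniformly in $x$. Next, I would pick $U$ open with compact closure in $\Omega$, cover $G$ with a locally finite family $\{x_jU\}_{j\in J}$ of bounded overlap $N_0$, and construct a partition of unity $\{\psi_j\}$ whose pieces are translates of a single cut-off supported in $U$, so that all derivatives of $\psi_j$ are uniformly bounded in $L^\infty$. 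Applying the displayed estimate to each $\psi_j\alpha\in C_c^{\infty,\mathfrak{n}}(x_jU)$, expanding $\mathcal{P}(\psi_j\alpha)=\psi_j\mathcal{P}\alpha+[\mathcal{P},\psi_j]\alpha$ (the commutator being of order $\le m-1$ with uniformly $L^\infty$ coefficients), reconstructing $|\alpha|^\mathfrak{X}_{\ell,p}$ from the $|\psi_j\alpha|^\mathfrak{X}_{\ell,p}$ via Leibniz, and summing in $j$ using the bounded overlap should produce a global semi-estimate of the form
\[
\sum_{\ell=0}^{m}|\alpha|^\mathfrak{X}_{\ell,p}\le C_1\bigl(\|\mathcal{P}\alpha\|_p+|\alpha|^\mathfrak{X}_{<m,p}+\|\alpha\|_p\bigr)\qquad \forall\,\alpha\in C_c^{\infty,\mathfrak{n}}(G).
\]

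The main obstacle will be the final absorption of the term $|\alpha|^\mathfrak{X}_{<m,p}$ on the right. The natural tool is a Landau--Kolmogorov type interpolation inequality for left-invariant vector fields,
\[
|\alpha|^\mathfrak{X}_{<m,p}\le \varepsilon\,|\alpha|^\mathfrak{X}_{m,p}+C_\varepsilon\|\alpha\|_p,\qquad \varepsilon>0,
\]
which, once available with $C_\varepsilon$ independent of $\alpha$, allows one to choose $\varepsilon$ so small that $C_1\varepsilon<1$ and the offending term is absorbed into the left-hand side. Establishing such an interpolation in our generality -- arbitrary connected Lie group (possibly non-unimodular), no H\"ormander assumption on $\mathfrak{X}$, and $p=1$ allowed -- should reduce along each one-parameter subgroup $\{\exp(tX_i)\}_{X_i\in\mathfrak{X}}$ to the classical one-variable Landau--Kolmogorov inequality, with the left Haar measure providing the transverse factor for a Fubini-type decomposition; the care required lies in controlling the modular function along the flows and in checking that the iteration over the generators of $\mathfrak{X}$ closes, with constants uniform in the localization.
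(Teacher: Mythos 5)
Your proposal follows essentially the same route as the paper: transport the local estimate by left translation, localize a general $\alpha$ with a bounded-overlap cover of $G$ by translates of a fixed precompact subset of $\Omega$ and translated cut-offs, commute $\mathcal{P}$ with the cut-offs via Leibniz, and absorb the resulting lower-order terms with an $\varepsilon$-interpolation inequality. Two points in your sketch need the paper's actual devices. First, the summation over the cover must be carried out on the $p$-th powers of the norms: bounded overlap gives $\sum_j\|f\|^p_{L^p(x_jV)}\le N_0\|f\|^p_p$ but no uniform bound on $\sum_j\|f\|_{L^p(x_jV)}$ (the number of pieces meeting $\supp\alpha$ depends on $\alpha$), so summing unpowered norms as in your displayed semi-estimate would yield a constant depending on $\alpha$; the paper introduces the quantities $\{\alpha\}^{\mathfrak{X}}_{\ell,p}=\sum_i\sum_{|I|=\ell}\|X_I\alpha_i\|_p^p$ precisely for this, and this is also why the theorem excludes $p=\infty$. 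Second, the interpolation inequality $|\alpha|^{\mathfrak{X}}_{<m,p}\le\varepsilon\,|\alpha|^{\mathfrak{X}}_{m,p}+C_\varepsilon\|\alpha\|_p$, which you rightly single out as the crux, is Lemma~\ref{lemmaNash} of the paper; rather than a Fubini reduction along one-parameter subgroups (where, as you note, the modular function intervenes, since left-invariant fields generate right translations, which rescale the left Haar measure), the paper quotes the inequality $\|X_ju\|_p\le\varepsilon\|X_j^2u\|_p+2\varepsilon^{-1}\|u\|_p$ from Robinson's book, valid for the right regular representation of $G$ on $L^p(\lambda)$, and then runs an induction on the order; this representation-theoretic formulation sidesteps the unimodularity issue you were worried about, so if you intend to prove the interpolation yourself you should phrase it that way rather than via a one-variable Landau--Kolmogorov argument. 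Finally, the paper organizes the reduction to the principal part slightly differently: it first shows (Proposition~\ref{propperturb}, itself a consequence of the interpolation lemma) that $\mathcal{P}$ is $\mathfrak{X}$-maximal hypoelliptic in $L^p(\Omega)$ if and only if $\mathcal{P}_m$ is, proves the global estimate for left-invariant operators, and transfers back, whereas you carry the term $|\alpha|^{\mathfrak{X}}_{<m,p}$ through the whole argument and absorb it only at the end; both variants are correct and rest on the same lemma.
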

We shall prove Theorem~\ref{teocontrolN} by proving that (i) the statement holds when $\mathcal{P}$ itself is left invariant, and (ii) that maximal hypoellipticity is preserved under $L^\infty$-perturbations of the lower order terms (Proposition~\ref{propperturb}). This is an analogous result to~\cite[Theorem 7]{BramBrand} for local estimates. As already mentioned, as an application of Theorem~\ref{teocontrolN} we characterize the domain of the closure of some maximal hypoelliptic differential operators on test functions on Lie groups, including Rumin's Laplacian on the contact structure of the Heisenberg group (Proposition~\ref{Chardomain}).

We observe that the study of maximal hypoellipticity of matrix-valued operators cannot be reduced to that of scalar-valued operators. Indeed, if all components $\mathcal{P}_{ij}$ of $\mathcal{P}$ are $\mathfrak{X}$-maximal hypoelliptic in $L^p(\Omega)$ for some $\Omega$, then $\mathcal{P}$ is also, but the converse is not true. As an example, if $\Delta$ is the Laplacian on $\R^n$ and $\mathfrak{X}= \{\partial_1,\dots, \partial_n \}$ is the canonical basis of vector fields on $\R^n$, then the operator
\[\mathcal{P}= \begin{pmatrix}
\Delta & 0\\
0 & \Delta
\end{pmatrix} \colon C^{\infty,2}(\R^n) \to C^{\infty,2}(\R^n)\]
is $\mathfrak{X}$-maximal hypoelliptic in $L^2(\R^n)$ since the Laplacian $\Delta$ is, but its off-diagonal components are not. Observe moreover that if $G$ is a symmetric, nonnegative, left-invariant and homogeneous differential operator of even order on a stratified group, and if $\mathfrak{X}$ is a basis of the first layer of $\mathfrak{g}$, then $(\mathfrak{X},2)$-maximal hypoellipticity is equivalent to hypoellipticity (cf.~\cite[Theorem 1.1]{BFT3}). 

\section{From local to global estimates}
We begin by proving a Nash $\varepsilon$-type inequality, which is the object of the following lemma.
\begin{lemma}\label{lemmaNash}
Let $m\geq 2$, $p\in [1,\infty]$ and $\mathfrak{X}$ be a family of $\nu$ left-invariant vector fields. Then there exists a constant $C(\nu,m)$ such that for every $\ell\leq m-1$ and every $\epsilon\in (0,1]$
$$
|\alpha|^{\mathfrak{X}}_{\ell,p}\leq\epsilon^{m-\ell} |\alpha|^{\mathfrak{X}}_{m,p}+C(\nu,m)\epsilon^{-\ell} \| \alpha\|_p \qquad\forall \alpha\in C^{\infty,\mathfrak{n}}_c(G).
$$
\end{lemma}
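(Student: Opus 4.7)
The plan is to reduce to the scalar case, establish the inequality for iterates of a single left-invariant vector field, and then induct on $\ell$. Since both sides of the asserted inequality are additive in the components $\alpha_i$, one may assume $\mathfrak{n}=1$ and write $u = \alpha \in C^\infty_c(G)$.

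First I would prove a one-dimensional version: for every $X \in \mathfrak{X}$ and every $k \geq 2$, there exists $C = C(k, X) > 0$ such that
\[
\|X u\|_p \leq C \epsilon^{k-1} \|X^k u\|_p + C \epsilon^{-1} \|u\|_p
\]
for every $u \in C^\infty_c(G)$ and $\epsilon \in (0,1]$. To this end, Taylor's formula along the right flow of $X$ gives, for every $x \in G$ and $t \in \R$,
\[
u(x\exp(tX)) = \sum_{j=0}^{k-1} \frac{t^j}{j!}(X^j u)(x) + \frac{1}{(k-1)!}\int_0^t (t-s)^{k-1}(X^k u)(x\exp(sX))\, \dd s,
\]
since $\frac{\dd^j}{\dd t^j}\big|_{t=0} u(x\exp(tX)) = (X^j u)(x)$ by left-invariance of $X$. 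Evaluating at the $k$ distinct points $t_i = \epsilon i/k$, $i = 1,\dots,k$, one obtains a linear system for $((X^j u)(x))_{j=0,\dots,k-1}$ whose coefficient matrix factors as $B\cdot\mathrm{diag}(1,\epsilon,\dots,\epsilon^{k-1})$, with $B$ a (scaled) Vandermonde matrix depending only on $k$. Inverting and solving for $(X u)(x)$, then taking $L^p$ norms and using that $u\mapsto u(\cdot\exp(tX))$ is bounded on $L^p(G,\lambda)$ by $\Delta(\exp(tX))^{-1/p}$ (uniformly bounded for $t\in(0,1]$), together with a Minkowski estimate of the Taylor remainder contributing $O(\epsilon^k)\|X^k u\|_p$, yields the claim.

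Next I would prove the full lemma by induction on $\ell$. The base case $\ell = 0$ is trivial. For the inductive step, write any $I$ with $|I| = \ell$ as $I = (i_1, I')$ with $|I'| = \ell - 1$, and set $f = X_{I'} u \in C^\infty_c(G)$. Applying the single-vector-field estimate with $X = X_{i_1}$ and $k = m - \ell + 1 \geq 2$ gives
\[
\|X_I u\|_p = \|X_{i_1} f\|_p \leq C \epsilon^{m-\ell} \|X_{i_1}^{m-\ell+1} f\|_p + C \epsilon^{-1}\|X_{I'} u\|_p.
\]
The first norm on the right equals $\|X_J u\|_p$ for a multi-index $J$ with $|J|=m$, hence is bounded by $|u|^{\mathfrak{X}}_{m,p}$; the second is bounded by $|u|^{\mathfrak{X}}_{\ell-1,p}$. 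Summing over all $|I| = \ell$ and applying the inductive hypothesis to $|u|^{\mathfrak{X}}_{\ell-1,p}$ with the same $\epsilon$ produces an inequality of the form
\[
|u|^{\mathfrak{X}}_{\ell,p} \leq C_1(\nu,m)\, \epsilon^{m-\ell}\, |u|^{\mathfrak{X}}_{m,p} + C_2(\nu,m)\, \epsilon^{-\ell}\,\|u\|_p.
\]
The coefficient $C_1$ in front of the top-order term is then absorbed by the rescaling $\epsilon \mapsto C_1^{-1/(m-\ell)}\epsilon \in (0,1]$ (after arranging $C_1 \geq 1$ without loss of generality), yielding the stated inequality with coefficient $1$ on $|u|^{\mathfrak{X}}_{m,p}$ and a final constant $C(\nu,m)$ obtained as the maximum over $\ell \in \{1,\dots,m-1\}$.

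The main technical point is the Vandermonde inversion in the single-vector-field step: one must verify that the $\epsilon$-dependence of the inverse matrix is exactly the diagonal factor $\mathrm{diag}(1,\epsilon^{-1},\dots,\epsilon^{-(k-1)})$, so that all powers of $\epsilon$ are transparent, and that the modular factor $\Delta(\exp(tX))^{-1/p}$ is uniformly bounded on $t \in (0,1]$ (which is trivial since $t$ ranges over a compact set away from $\infty$). Apart from this, the rest is a routine combination of Minkowski's inequality, the left-invariance of the $X_i$'s, and the inductive bookkeeping of constants.
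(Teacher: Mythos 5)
Your proof is correct, and it follows the same broad strategy as the paper's --- reduce to the scalar case, establish a one--vector-field interpolation inequality by Taylor-expanding along the flow, then induct on $\ell$ --- but both ingredients are implemented differently. The paper only ever needs the case $k=2$ of your single-field estimate, namely $\|X_j u\|_p\leq\epsilon\|X_j^2u\|_p+2\epsilon^{-1}\|u\|_p$, which it quotes from Robinson's book (deriving it directly from Taylor's formula only on homogeneous groups, where dilations produce the $\epsilon$); it then proves by induction the adjacent-level estimate $|u|_{\ell,p}\leq\epsilon|u|_{\ell+1,p}+C(\nu,\ell)\epsilon^{-\ell}\|u\|_p$ and chains these backwards from $\ell$ to $m$. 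You instead prove from scratch the stronger inequality $\|Xu\|_p\leq C\epsilon^{k-1}\|X^ku\|_p+C\epsilon^{-1}\|u\|_p$ by evaluating the Taylor expansion at the $k$ nodes $t_i=\epsilon i/k$ and inverting a Vandermonde system, which lets your induction jump directly to the top level $m$; the concluding rescaling $\epsilon\mapsto C_1^{-1/(m-\ell)}\epsilon$ to normalize the leading coefficient is legitimate. What your route buys is self-containedness and a derivation of the $\epsilon$-dependence that works on a general Lie group without dilations (the multi-point evaluation replaces the scaling argument), at the price of the Vandermonde bookkeeping; the paper's route is shorter because the hard analytic input is outsourced to a quoted lemma. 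One shared caveat: your constant inherits a dependence on $p$ and on $\sup_{t\in[0,1]}\Delta(\exp(tX))^{-1/p}$ for $X\in\mathfrak{X}$, so strictly speaking it is not a function of $\nu$ and $m$ alone; the paper's constant carries the same hidden dependence, since right translation on $L^p(\lambda)$ is an isometry only when $G$ is unimodular, so this is not a defect of your argument relative to the original.
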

\begin{proof}
Observe first of all that the vector-valued case is a straightforward consequence of the scalar case. Thus, we can limit ourselves to prove the statement when $\mathfrak{n}=1$, i.e.\ for $\alpha= u\in C_c^\infty(G)$. Moreover, since the family of vector fields $\mathfrak{X}$ plays no role, to simplify the notation we shall write simply $|u|_{\ell,p}$ instead of $|u|_{\ell,p}^{\mathfrak{X}}$.

Let $\epsilon\in (0,1]$. The starting point is the inequality
\begin{equation}\label{eqrobin}
\|X_j u\|_p \leq \varepsilon \|X_j^2 u\|_p + 2\varepsilon^{-1} \|u\|_p
\end{equation}
which can be found in the proof of~\cite[Lemma III.3.3]{Robinson}, by considering the right-translation as a representation of $G$ into $L^p(\lambda)$. Taking the sum over $j=1,\dots,\nu$ we then get
\begin{equation}\label{eq1}
|u|_{1,p}\leq\epsilon |u|_{2,p}+2\nu\epsilon^{-1} \| u\|_p.
\end{equation}
By induction on $\ell$, we now prove that for every $\varepsilon>0$
\begin{equation}\label{eq2}
|u|_{\ell,p}\leq\epsilon |u|_{\ell+1,p}+C(\nu,\ell)\epsilon^{-\ell} \| u\|_p.
\end{equation}
If $\ell=1$ this is~\eqref{eq1}. Hence, assume~\eqref{eq2} holds for $\ell-1\leq m-2$. Then, by~\eqref{eq1}
\begin{align}
|u|_{\ell,p}&=\sum_{|I|=\ell-1} |X_I u|_{1,p} \nonumber\\&
\leq\sum_{|I|=\ell-1} (\epsilon |X_I u|_{2,p}+2\nu\epsilon^{-1} \| X_I u\|_p) \nonumber \\
&= \epsilon |u|_{\ell+1,p}+2\nu\epsilon^{-1} |u|_{\ell-1,p}. \label{eq3}
\end{align}
Since
\[
|u|_{\ell-1,p}\leq\delta |u|_{\ell,p}+C(\nu,\ell-1) \,\delta^{-(\ell-1)} \| u\|_p \qquad\forall \delta>0
\]
by the inductive assumption, if $\delta=\epsilon/(4\nu)$ we get from~\eqref{eq3} 
\[
|u|_{\ell,p}\leq\epsilon\,|u|_{\ell+1,p}+\frac{1}{2}|u|_{\ell,p}+C(\nu,\ell-1)2^{2\ell-1} \nu^{\ell}\, \epsilon^{-\ell}\,\|u\|_p,
\]
and thus
\[
|u|_{\ell,p}\leq2\epsilon\,|u|_{\ell+1,p}+C(\nu,\ell-1)2^{2\ell} \nu^{\ell} \epsilon^{-\ell}\,\|u\|_p \qquad\forall \epsilon>0.
\]
This is~\eqref{eq2} with $2\epsilon$ instead of $\epsilon$. The statement is then proved applying~\eqref{eq2} backwards from $\ell=m-1$.
\end{proof}

\begin{remark}
If $G$ is a homogeneous group and $(\delta_r)$ is a family of dilations on $G$ (see e.g.~\cite[Definition 1.3.24]{Bonfiglioli}) a more direct proof of~\eqref{eqrobin} is the following. By Taylor's formula applied to the function $ t \mapsto u(x\exp(tX_j))$, we get
\[
u(x\exp X_j)=u(x)+X_ju(x)+\int_0^1 (1-t) X_j^2 u(x\exp tX_j) \, \dd t.
\]
Thus, by the invariance of the measure and Minkowski's integral inequality
\[
\|X_j u\|_p\leq\frac{1}{2}\|X_j^2 u\|_p +2\|u\|_p,
\]
and this inequality applied to $u\circ \delta_{2\epsilon}$ gives~\eqref{eqrobin}.
\end{remark}
By Lemma~\ref{lemmaNash} we can now prove that maximal hypoellipticity of an operator $\mathcal{P}$ is preserved under $L^\infty$-perturbations of its lower order terms.
\begin{proposition}\label{propperturb}
Let $\mathfrak{X}$ be a family of left-invariant vector fields, and $\mathcal{P}$ be a differential operator of degree $m$ along derivatives of $\mathfrak{X}$ such that $\mathcal{P}_{<m}$ has $L^\infty(\Omega)$ coefficients for some open $\Omega \subseteq G$. If $p\in [1,\infty]$, then: 
 \begin{itemize}
\item[\emph{1.}] if $\mathcal{P}$ is $\mathfrak{X}$-maximally hypoelliptic in $L^p(\Omega)$, then there exists a constant $C=C(\Omega)$ such that $\|\mathcal{P}\alpha\|_p \leq C(\|\mathcal{P}_m \alpha\|_p + \|\alpha\|_p)$ for every $\alpha\in C_c^{\infty,\mathfrak{n}}(\Omega)$;
\item[\emph{2.}] if $\mathcal{P}_m$ is $\mathfrak{X}$-maximal hypoelliptic in $L^p(\Omega)$, then there exists a constant $C'=C'(\Omega)$ such that $\|\mathcal{P}_m \alpha\|_p \leq C'(\|\mathcal{P} \alpha\|_p + \|\alpha\|_p)$ for every $\alpha\in C_c^{\infty,\mathfrak{n}}(\Omega)$. 
\end{itemize}
In particular, $\mathcal{P}$ is $\mathfrak{X}$-maximal hypoelliptic in $L^p(\Omega)$ if and only if $\mathcal{P}_m$ is.
\end{proposition}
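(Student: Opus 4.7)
The plan rests on Lemma~\ref{lemmaNash} together with the observation that, since $\mathcal{P}$ is a differential operator along derivatives of $\mathfrak{X}$ and $\mathcal{P}_{<m}$ has $L^\infty(\Omega)$ coefficients, one has the crude bound
$$\|\mathcal{P}_{<m}\alpha\|_p \leq M\sum_{\ell=0}^{m-1}|\alpha|^{\mathfrak{X}}_{\ell,p}, \qquad \alpha\in C_c^{\infty,\mathfrak{n}}(\Omega),$$
where $M$ depends on the $L^\infty(\Omega)$-norms of the coefficients of $\mathcal{P}_{<m}$. Combining this with Lemma~\ref{lemmaNash} applied termwise with a common parameter $\epsilon\in(0,1]$ yields, for some constant $C_0=C_0(\nu,m,M)$,
$$\|\mathcal{P}_{<m}\alpha\|_p \leq C_0\bigl(\epsilon\,|\alpha|^{\mathfrak{X}}_{m,p}+\epsilon^{-(m-1)}\|\alpha\|_p\bigr). \qquad (\star)$$
The core strategy is then a standard absorption argument based on $(\star)$.

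For part 1, I would start from the maximal hypoellipticity of $\mathcal{P}$, which in particular gives $|\alpha|^{\mathfrak{X}}_{m,p}\leq C(\Omega)(\|\mathcal{P}\alpha\|_p+\|\alpha\|_p)\leq C(\Omega)(\|\mathcal{P}_m\alpha\|_p+\|\mathcal{P}_{<m}\alpha\|_p+\|\alpha\|_p)$ by the triangle inequality. Inserting $(\star)$ and choosing $\epsilon$ small enough that $C(\Omega)\,C_0\,\epsilon\leq 1/2$ lets me absorb the $|\alpha|^{\mathfrak{X}}_{m,p}$ term on the right-hand side, obtaining $|\alpha|^{\mathfrak{X}}_{m,p}\leq C_1(\|\mathcal{P}_m\alpha\|_p+\|\alpha\|_p)$. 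Re-inserting this last bound into $(\star)$ and using $\|\mathcal{P}\alpha\|_p\leq \|\mathcal{P}_m\alpha\|_p+\|\mathcal{P}_{<m}\alpha\|_p$ concludes part 1.

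Part 2 is dual. From the maximal hypoellipticity of $\mathcal{P}_m$ I get directly $|\alpha|^{\mathfrak{X}}_{m,p}\leq C(\Omega)(\|\mathcal{P}_m\alpha\|_p+\|\alpha\|_p)$, and by the triangle inequality $\|\mathcal{P}_m\alpha\|_p\leq \|\mathcal{P}\alpha\|_p+\|\mathcal{P}_{<m}\alpha\|_p$. I would then apply $(\star)$, substitute the previous bound for $|\alpha|^{\mathfrak{X}}_{m,p}$, and choose $\epsilon$ small to absorb the resulting $\|\mathcal{P}_m\alpha\|_p$ term on the right. The final equivalence follows at once by combining the two parts with the defining inequality of maximal hypoellipticity. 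The only technical point is the two absorption arguments, which are routine once $(\star)$ is in hand; I do not anticipate any serious obstacle, and the proof works uniformly in $p\in[1,\infty]$.
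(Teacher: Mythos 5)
Your proposal is correct and follows essentially the same route as the paper: the key estimate $(\star)$ is exactly the bound the paper derives from Lemma~\ref{lemmaNash} and the $L^\infty(\Omega)$ bound on the lower-order coefficients, and both arguments conclude by a small-$\epsilon$ absorption (the paper absorbs $\tfrac12\|\mathcal{P}\alpha\|_p$ directly, while you first absorb into $|\alpha|^{\mathfrak{X}}_{m,p}$ and then substitute back, a cosmetic difference). No gaps.
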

\begin{proof}
Let $A$ be the maximum of the $L^\infty(\Omega)$ norms of $\mathcal{P}_{<m}$'s coefficients, and let $\alpha \in C_c^{\infty,\mathfrak{n}}(\Omega)$. Then, by Lemma~\ref{lemmaNash}, for every $\epsilon\in (0,1]$ 
\begin{align*}
\|\mathcal{P}_{<m}\alpha\|_p
&\leq C(A,\mathfrak{m}) \bigg( \sum_{\ell=1}^{m-1} |\alpha|^{\mathfrak{X}}_{\ell,p}+\|\alpha\|_p\bigg) \\ 
&\leq C'(A,\mathfrak{m}) \bigg( \sum_{\ell=1}^{m-1}\Big(\epsilon^{m-\ell} |\alpha|^{\mathfrak{X}}_{m,p}+C(\nu,m)\epsilon^{-\ell}\|\alpha\|_p\Big)+\|\alpha\|_p\bigg)\\ 
&\leq C(A,\mathfrak{m},\nu,m) \Big(\epsilon|\alpha|^{\mathfrak{X}}_{m,p}+\epsilon^{-m+1}\|\alpha\|_p\Big).
\end{align*}
Therefore, if we choose $\epsilon=\big(2C(A,\mathfrak{m},\nu,m)\ C(\Omega)\big)^{-1}$ we get
\[
\|\mathcal{P}_{<m}\alpha\|_p\leq\frac{1}{2C(\Omega)} |\alpha|^{\mathfrak{X}}_{m,p}+C(\nu,m,A,\Omega)\|\alpha\|_p.
\]
Suppose now that $\mathcal{P}$ is $\mathfrak{X}$-maximal hypoelliptic in $L^p(\Omega)$. Then
\begin{align*}
\|\mathcal{P}\alpha\|_p&\leq\|\mathcal{P}_m\alpha\|_p+\|\mathcal{P}_{<m}\alpha\|_p \\
 &\leq\|\mathcal{P}_m\alpha\|_p+ \frac{1}{2C(\Omega)}|\alpha|^{\mathfrak{X}}_{m,p}+C(\nu,m,A,\Omega)\|\alpha\|_p\\ 
&\leq\|\mathcal{P}_m\alpha\|_p+ \frac{1}{2}\big(\| \mathcal{P}\alpha\|_p+\|\alpha\|_p\big)+C(\nu,m,A,\Omega)\|\alpha\|_p,
\end{align*}
from which $\|\mathcal{P}\alpha\|_p\leq C (\|\mathcal{P}_m \alpha\|_p+\|\alpha\|_p)$ follows.
\par
The point $2.$ is proved in the same way.
\end{proof}
The last ingredient to prove Theorem~\ref{teocontrolN} is a covering lemma, which is an easy generalisation of a theorem of Anker. Its proof is a straightforward adaptation of~\cite[Lemma 1]{Anker}, and is omitted.
\begin{lemma}\label{Anker}
Let $G$ be a locally compact second-countable group, and let $U$, $V$ be two open relatively compact subsets of $G$ such that $U\subset V$. Then there exists a countable family $\mathfrak{U}=\{x_k\colon k\in \N\}\subset G$ and $n\in \N$ such that
\begin{itemize}
\item[\emph{1.}] $G=\bigcup_{k} x_k U$;
\item[\emph{2.}] $\forall x_0 \in \mathfrak{U}$, $x_0 V \cap x V$ is non-empty for at most $n$ elements $x\in \mathfrak{U}$.
\end{itemize} 
\end{lemma}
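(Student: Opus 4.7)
The plan is a Vitali-style maximal disjoint family argument, a standard device for producing well-distributed coverings on locally compact groups. First I would reduce to the case $e \in U$: picking any $u_0 \in U$ and replacing $U, V$ with $u_0^{-1}U, u_0^{-1}V$ does not affect the conclusion, since the base points $x_k$ are only relabelled by $x_k u_0^{-1}$ and both the inclusion $U \subset V$ and the overlap condition in item 2 are preserved. In this normalised setting, continuity of multiplication at $e$ yields a symmetric open neighbourhood $W$ of $e$ with $WW \subset U$ (equivalently $WW^{-1} \subset U$). By Zorn's lemma, choose $\mathfrak{U} = \{x_k : k \in \N\}$ to be a maximal family in $G$ whose translates $x_k W$ are pairwise disjoint. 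Since $G$ is second countable, any collection of pairwise disjoint nonempty open sets is at most countable, so $\mathfrak{U}$ is countable.

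For part 1, given $y \in G$ the maximality of $\mathfrak{U}$ forces $yW \cap x_k W \neq \emptyset$ for some $k$, whence $y \in x_k WW^{-1} \subset x_k U$, and so $G = \bigcup_k x_k U$.

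For part 2, fix a left Haar measure $\lambda$ on $G$ (available by local compactness). If $x \in \mathfrak{U}$ satisfies $xV \cap x_0 V \neq \emptyset$ for some $x_0 \in \mathfrak{U}$, then $x \in x_0 VV^{-1}$ and consequently $xW \subset x_0 VV^{-1}W$. The set $VV^{-1}W$ is relatively compact, so its Haar measure $M$ is finite, and by left-invariance $\lambda(x_0 VV^{-1}W) = M$ independently of $x_0$. The translates $xW$ are pairwise disjoint subsets of $x_0 VV^{-1}W$, each of measure $\lambda(W) > 0$, so the number of such $x$ is bounded by $n := \lfloor M / \lambda(W) \rfloor$, which is independent of $x_0$. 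I do not anticipate a serious obstacle here; this is a routine maximal-disjoint-family argument combined with a measure-counting bound. The only point requiring mild care is the initial reduction $e \in U$, which ensures that $U$ contains a neighbourhood of the identity so that a symmetric $W$ with $WW \subset U$ exists.
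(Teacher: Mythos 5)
Your argument is correct and is precisely the standard proof the paper defers to (Anker's Lemma 1): reduce to $e\in U$, take a maximal family with pairwise disjoint translates of a small symmetric $W$ satisfying $WW\subset U$, and bound the overlap by counting disjoint translates of $W$ inside the fixed-measure set $x_0VV^{-1}W$ via left Haar measure. All the steps, including the initial relabelling and the finiteness of $\lambda(VV^{-1}W)$ by relative compactness, check out.
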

Property 2 of Lemma~\ref{Anker} will be referred to as \emph{bounded overlap property}. We are now ready to prove Theorem~\ref{teocontrolN}.
\begin{proof}[of Theorem~\ref{teocontrolN}]
For notational convenience, for every open $\Omega\subseteq G$ and $0\leq \ell \leq m$ we define
\[
\{u\}^\mathfrak{X}_{\ell,p}=\sum_{|I|=\ell} \|X_I u\|_p^p, \quad u \in C_c^\infty(\Omega),\qquad  \{\alpha\}^{\mathfrak{X}}_{\ell,p}= \sum_{i=1}^\mathfrak{n} \{\alpha_i\}^{\mathfrak{X}}_{\ell,p}, \quad  \alpha \in C^{\infty,\mathfrak{n}}_c(\Omega).
\]
All the vector fields are meant to belong to $\mathfrak{X}$. Under this new notation, the $\mathfrak{X}$-maximal hypoellipticity of $\mathcal{P}$ in $L^p(\Omega)$ is equivalent to saying that
\begin{equation}\label{maxhypov2}
\sum_{\ell=0}^m\{\alpha\}^{\mathfrak{X}}_{\ell,p}\leq C(\Omega)\, (\|\mathcal{P}\alpha\|_p^p+\|\alpha\|_p^p) \qquad\forall \alpha\in C^{\infty,\mathfrak{n}}_c(\Omega).
\end{equation}
Moreover, from Lemma~\ref{lemmaNash} we infer the existence of a constant $\kappa=\kappa(p,\nu,m)$ such that for every $\ell\leq m-1$ and every $\epsilon>0$
\begin{equation}\label{Lemmakappav2}
\{\alpha\}^{\mathfrak{X}}_{\ell,p}\leq\kappa (\epsilon^{p(m-\ell)} \{\alpha\}^{\mathfrak{X}}_{m,p}+\epsilon^{-p\ell} \|\alpha\|_p^p) \qquad\forall \alpha\in C^{\infty,\mathfrak{n}}_c(G).
\end{equation}
We divide the proof into two steps. 

\smallskip

\emph{Step 1}. We assume that $\mathcal{P}$ is left invariant, and prove the statement. 

Since $\mathcal{P}$ is left invariant, it is $(\mathfrak{X},p)$-maximal hypoelliptic, hence we may assume that $\Omega$ is an open neighbourhood of the identity $e\in G$. Let $U$, $V$ be open sets such that $U\subset V \subset \Omega$. By Lemma~\ref{Anker}, there exists a countable family $(x_k)$ of elements of $G$ such that $G= \bigcup_k x_k U$ and each $x_k V$ intersects at most $n$ other $x_k V$. We set $U_k\coloneqq x_k U$ and $V_k \coloneqq x_k V$.

Let now $\psi \in C_c^\infty(V)$ such that $\psi \equiv 1$ on $U$, $\psi \equiv 0$ on $V^c$ and $\psi$ takes values in $[0,1]$; set $\psi_k = \tau_{x_k}^*\psi$. Let $\alpha\in C_c^{\infty,\mathfrak{n}}(G)$. Then
\[\sum_{\ell=0}^m \{\alpha\}^\mathfrak{X}_{\ell,p}\leq \sum_k \sum_{\ell=0}^m \{(\alpha\psi_k)\}^\mathfrak{X}_{\ell,p} = \sum_k \sum_{\ell=0}^m \{\tau_{x_k^{-1}}^*(\alpha\psi_k)\}^\mathfrak{X}_{\ell,p} \]
since $\psi_k \equiv 1$ on $U_k$, $(U_k)$ is a covering of $G$, and both the $L^p$ norm and the vector fields in $\mathfrak{X}$ are left invariant. 

Observe that $\tau_{x_k^{-1}}^*(\alpha\psi_k)$ is supported in $V$. Thus, by the $\mathfrak{X}$-maximal hypoellipticity of $\mathcal{P}$ in $L^p(V)$,
\begin{align}\label{Palphapsik}
\sum_{\ell=0}^m \{\alpha\}^\mathfrak{X}_{\ell,p} & \leq C(V) \sum_k \Big[ \|\tau_{x_k^{-1}}^*(\alpha\psi_k)\|_{p}^p+ \|\mathcal{P}(\tau_{x_k^{-1}}^*(\alpha\psi_k))\|^p_{p}\Big] \nonumber \\&\leq C(V,n) \Big[\|\alpha\|_p^p +\sum_k \|\mathcal{P}(\alpha\psi_k)\|_{p}^p \Big]
\end{align}
where we used the bounded overlap property of the family $(V_k)$, and the left invariance of $\mathcal{P}$ and of the $L^p$-norm. 

By Leibniz's formula,
\[\mathcal{P}(\alpha\psi_k) = \psi_k \mathcal{P}\alpha + \mathcal{R}(\psi_k; \alpha)\]
where $\mathcal{R}(\psi_k;\alpha)$ is an $\mathfrak{m}$-vector supported in $V_k$ whose entries are sum of terms having at least one derivative of $\psi_k$, at most $m-1$ derivatives of the components of $\alpha$, and all the derivatives are along the vector fields in $\mathfrak{X}$. By the bounded overlap property and~\eqref{Palphapsik}
\begin{equation*}
\begin{split}
\sum_{\ell=0}^m \{\alpha\}^\mathfrak{X}_{\ell,p} & 
\leq C(V,n) \Big[ \|\mathcal{P}\alpha\|_p^p + \|\alpha\|_p^p + \sum_{k} \| \mathcal{R}(\psi_k;\alpha)\|_{p}^p \Big]. 
\end{split}
\end{equation*}
It remains to control the last term. If we denote $\max_{|I|\leq m}\, \|X_I \psi\|_\infty$ by $E$, then by~\eqref{Lemmakappav2} we obtain that for every $\epsilon\in (0,1)$
\begin{align*}
\sum_{k} \| \mathcal{R}(\psi_k;\alpha)\|_{p}^p
%&\leq C(\mathfrak{m},m)\,E\sum_{\ell=0}^{m-1} \sum_{k} \abs{\alpha \chi_{V_k}}^\mathfrak{X}_{\ell,p}\\ 
&\leq C(\mathfrak{m},m,E)\, n \sum_{\ell=0}^{m-1} \{ \alpha\}^\mathfrak{X}_{\ell,p} \\
&\leq C(\mathfrak{m},m,E,n,\kappa)\, \Big(\epsilon^p \{\alpha\}^\mathfrak{X}_{m,p}+C \epsilon^{-p(m-1)} \|\alpha\|_{p}^p\Big).
\end{align*}
We used again the bounded overlap property of the sets $(V_k)$ and left invariance of the vector fields in $\mathfrak{X}$. Thus, by choosing $\epsilon^p=(2C(\mathfrak{m},m,E,n,\kappa)C(V,n))^{-1}$, we obtain the global estimate as desired.

\smallskip

{\emph{Step 2}}. We now consider $\mathcal{P}$ as in the statement.
 
Since $\mathcal{P}$ is $\mathfrak{X}$-maximal hypoelliptic in $L^p(\Omega)$, then $\mathcal{P}_m$ is also by Proposition~\ref{propperturb}. By Step 1 applied to $\mathcal{P}_m$ and by Proposition~\ref{propperturb} again, for $\alpha\in C_c^{\infty,\mathfrak{n}}(G)$ we get
\begin{equation*}
\sum_{\ell=0}^m |\alpha|_{\ell,p}^{\mathfrak{X}} \leq C ( \|\mathcal{P}_m\alpha\|_p + \|\alpha\|_p) \leq \tilde{C} ( \|\mathcal{P}\alpha\|_p + \|\alpha\|_p)
\end{equation*}
which concludes the proof.
\end{proof}

\section{The domain of certain maximal hypoelliptic operators}\label{Secdomain}

We can finally characterize the domain of the closure in $L^{p,\mathfrak{n}}(\lambda)$, $p\in [1,\infty)$, of differential operators which are $\mathfrak{X}$-maximal hypoelliptic in $L^p(\Omega)$ for some $\Omega \subseteq G$, considered on test $\mathfrak{n}$-vectors. When $p=2$, notable examples of such operators are Rumin's Laplacians on the contact structure of the Heisenberg group, see e.g.~\cite{Rumin1,Rumin2,BFT1,BFT2}.

We first introduce Sobolev spaces of integer regularity on the Lie group $G$ (see also~\cite{BPTV}). For every positive integer $\sigma$, we define the Sobolev space
\[
L^{p}_\sigma(\lambda,\mathfrak{X})\coloneqq \{u\in L^p(\lambda)\colon X_I u\in L^p(\lambda), \; |I|\leq \sigma\}
\]
endowed with the norm
\[
\|u\|_{L^p_\sigma(\lambda, \mathfrak{X})} \coloneqq \sum_{0\leq |I|\leq \sigma} \|X_I u\|_{L^p(\lambda)}
\]
where all the vector fields that appear belong to the family $\mathfrak{X}$. The space $L^{p,\mathfrak{n}}_\sigma(\lambda, \mathfrak{X})$ is defined as the space of $\mathfrak{n}$-vector whose entries are in $L^p_\sigma(\lambda, \mathfrak{X})$, with the norm
\[
\|\alpha\|_{L^{p,\mathfrak{n}}_\sigma(\lambda, \mathfrak{X})} \coloneqq \sum_{i=1}^\mathfrak{n} \|\alpha_i\|_{L^p_\sigma(\lambda, \mathfrak{X})}.
\]
\begin{proposition}\label{Chardomain}
Let $\mathfrak{X}$ be a family of left-invariant vector fields. Let $\mathcal{P}$ be a matrix-valued differential operator of degree $m$ along derivatives of $\mathfrak{X}$, whose principal part is left invariant and with lower order terms with $L^\infty$ coefficients. Let $\Dom(\mathcal{P})\coloneqq C_c^{\infty,\mathfrak{n}}(G)$. Assume that $\mathcal{P}$ is $\mathfrak{X}$-maximal hypoelliptic in $L^p(\Omega)$ for some $\Omega \subseteq G$, and closable in $L^{p,\mathfrak{n}}(\lambda)$ for some $p\in [1,\infty)$. If $\overline{\mathcal{P}}$ stands for its closure, then
\[
\Dom(\overline{\mathcal{P}})= \overline{C_c^{\infty,\mathfrak{n}}(G)}^{\; L^{p,\mathfrak{n}}_m(\lambda, \mathfrak{X})}.
\]
\end{proposition}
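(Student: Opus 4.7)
The strategy is to show that on $C_c^{\infty,\mathfrak{n}}(G)$ the graph norm of $\mathcal{P}$ is equivalent to the Sobolev-type norm $\|\cdot\|_{L^{p,\mathfrak{n}}_m(\lambda,\mathfrak{X})}$, after which the desired description of $\Dom(\overline{\mathcal{P}})$ follows by unwinding the definition of the closure.

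First, I would invoke Theorem~\ref{teocontrolN}: the hypotheses on $\mathcal{P}$ are exactly those needed to upgrade the local maximal hypoellipticity on $\Omega$ to a global estimate on $G$. Rewritten in terms of the Sobolev-type norm introduced above, this gives
\[
\|\alpha\|_{L^{p,\mathfrak{n}}_m(\lambda,\mathfrak{X})} \;\leq\; C\,\bigl(\|\mathcal{P}\alpha\|_p + \|\alpha\|_p\bigr) \qquad \forall\, \alpha \in C_c^{\infty,\mathfrak{n}}(G).
\]
For the reverse inequality, the left invariance of $\mathcal{P}_m$ means its coefficients are constant when $\mathcal{P}_m$ is expanded along the iterated left-invariant derivatives $X_I$, while $\mathcal{P}_{<m}$ has $L^\infty$ coefficients by assumption. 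A direct triangle inequality argument then yields
\[
\|\mathcal{P}\alpha\|_p \;\leq\; C'\,\|\alpha\|_{L^{p,\mathfrak{n}}_m(\lambda,\mathfrak{X})} \qquad \forall\, \alpha \in C_c^{\infty,\mathfrak{n}}(G).
\]
Combining the two estimates (and absorbing constants), the graph norm $\alpha \mapsto \|\mathcal{P}\alpha\|_p + \|\alpha\|_p$ is equivalent to $\|\cdot\|_{L^{p,\mathfrak{n}}_m(\lambda,\mathfrak{X})}$ on $C_c^{\infty,\mathfrak{n}}(G)$.

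Second, I would translate this equivalence into the characterization of the domain. By definition, $u \in \Dom(\overline{\mathcal{P}})$ iff there exists a sequence $(u_n) \subset C_c^{\infty,\mathfrak{n}}(G)$ with $u_n \to u$ in $L^{p,\mathfrak{n}}(\lambda)$ and $(\mathcal{P} u_n)$ Cauchy in $L^{p,\mathfrak{n}}(\lambda)$. By the equivalence just established, this is the same as requiring $(u_n)$ to be Cauchy in $\|\cdot\|_{L^{p,\mathfrak{n}}_m(\lambda,\mathfrak{X})}$. Since $L^{p,\mathfrak{n}}_m(\lambda,\mathfrak{X})$ embeds continuously into $L^{p,\mathfrak{n}}(\lambda)$, the two limits agree, and both inclusions $\Dom(\overline{\mathcal{P}}) \subseteq \overline{C_c^{\infty,\mathfrak{n}}(G)}^{\,L^{p,\mathfrak{n}}_m(\lambda,\mathfrak{X})}$ and the reverse follow simultaneously.

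The main obstacle in this argument is Theorem~\ref{teocontrolN} itself; once it is in hand, the rest is clean bookkeeping. The only subtle point is the identification of limits in the two ambient spaces $L^{p,\mathfrak{n}}(\lambda)$ and $L^{p,\mathfrak{n}}_m(\lambda,\mathfrak{X})$, but this is automatic from the continuous embedding of the latter into the former.
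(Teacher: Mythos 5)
Your proposal is correct and follows essentially the same route as the paper: invoke Theorem~\ref{teocontrolN} to globalize the maximal hypoellipticity estimate, observe that the reverse bound $\|\mathcal{P}\alpha\|_p \lesssim \|\alpha\|_{L^{p,\mathfrak{n}}_m(\lambda,\mathfrak{X})}$ follows from the boundedness of the coefficients, and conclude that the graph norm and the Sobolev norm are equivalent on $C_c^{\infty,\mathfrak{n}}(G)$. The paper stops at the norm equivalence and leaves the unwinding of the closure implicit, so your final paragraph identifying limits via the continuous embedding is a welcome (and correct) elaboration rather than a deviation.
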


\begin{proof}
Let $\alpha \in C_c^{\infty,\mathfrak{n}}(G)$. By Theorem~\ref{teocontrolN}, $\mathcal{P}$ is $\mathfrak{X}$-maximal hypoelliptic in $L^p(G)$. Therefore, for every $\alpha \in C_c^{\infty,\mathfrak{n}}(G)$
\[
\|\alpha\|_{L^{p,\mathfrak{n}}_{m}(\lambda, \mathfrak{X})} = \sum_{|I|\leq m}\sum_{i=1}^\mathfrak{n} \|X_I \alpha_i\|_{p} \lesssim (\|\mathcal{P} \alpha\|_p + \|\alpha\|_p) \lesssim \sum_{|I|\leq m}\sum_{i=1}^\mathfrak{n} \|X_I \alpha_i\|_2 =\|\alpha\|_{L^{p,\mathfrak{n}}_{m}(\lambda, \mathfrak{X})}.
\]
In other words, the norm of $L^{p,\mathfrak{n}}_{m}(\lambda, \mathfrak{X})$ and the graph norm of $\mathcal{P}$ are equivalent on $C_c^{\infty,\mathfrak{n}}(G)$. 
\end{proof}

We finally remark that by Proposition~\ref{Chardomain}, a result of Marchi~\cite[Theorem I.3.29]{Marchi} and the density of test functions on the Sobolev spaces on the Heisenberg group (cf.~\cite{Folland}), the $L^2$ closure of Rumin's Laplacians on smooth and compactly supported horizontal differential forms of suitable degree on the contact structure of the Heisenberg group are self-adjoint. In other words, they are essentially self-adjoint. Since the introduction of Rumin's complex goes beyond the scope of this paper, we refer to one of~\cite{Rumin2,BFT1,BFT2,Marchi} and leave the few details to the interested reader.

\smallskip

\subsection*{Acknowledgements} 
I would like to thank Giancarlo Mauceri for several helpful discussions and his constant help and support.

\end{document}